\title{Non-existence results for the weighted $p$-Laplace equation with singular nonlinearities}
\author{Kaushik Bal
        \and        
        Prashanta Garain
        }
\newtheorem{The}{Theorem}[section]
\newtheorem{Lem}{Lemma}[section]
\newtheorem{Rem}{Remark}[section]
\newtheorem{Def}{Definition}[section]
\newenvironment{AMS}{}{}
\newenvironment{keywords}{}{}
\begin{document}
\newpage
\maketitle
\begin{abstract}
In this paper we present some non existence results concerning the stable solutions to the equation 
$$\operatorname{div}(w(x)|\nabla u|^{p-2}\nabla u)=g(x)f(u)\;\;\mbox{in}\;\;\mathbb{R}^N;\;\;p\geq 2$$ 
when $f(u)$ is either $u^{-\delta}+u^{-\gamma}$, $\delta,\gamma>0$ or $\exp(\frac{1}{u})$
and for a suitable class of weight functions $w,g$.
\end{abstract}

\begin{keywords}
\textbf{Keywords:} p-Laplacian; Non-existence; Stable solution
\end{keywords}

\begin{AMS}
\textbf{2010 Subject Classification} 35A01, 35B93, 35J92
\end{AMS}

\section{Introduction}
Consider the problem 
\begin{equation}\label{mp}
-\operatorname{div}\big(w(x)|\nabla u|^{p-2}\nabla u\big)=g(x)f(u)\;\;\mbox{in}\;\;\mathbb{R}^N\;\;u>0\;\mbox{in}\;\;\mathbb{R}^N
\end{equation}
where $f(u)$ is either $-u^{-\delta}-u^{-\gamma}$ to be denoted by $(\ref{mp})_s$ or $-\exp(\frac{1}{u})$ to be denoted by $(\ref{mp})_e$ and $\delta,\gamma>0$.
We also assume $p\geq 2$, $N\geq 1$ and the weight functions $w,g\in L^1_{loc}(\mathbb{R}^N)$ both positive a.e. in 
$\mathbb{R}^N$ such that $g^{-1}\in L^{\infty}(\mathbb{R}^N)$ unless otherwise mentioned.

\begin{Def}
We say that $u\in C^{1}(\mathbb{R}^N)$ is a weak solution to the equation $(\ref{mp})$ if 
for all $\varphi\in C_{c}^{1}(\mathbb{R}^N)$ we have,
\begin{equation}\label{ws1}
\int_{\mathbb{R}^N}w(x)|\nabla u|^{p-2} \nabla u\nabla \varphi dx=\int_{\mathbb{R}^N}g(x)f(u)\varphi dx
\end{equation}
\end{Def}
\begin{Def}
A weak solution $u$ of equation $(\ref{mp})$ is said to be stable if for all $\varphi\in C_{c}^{1}(\mathbb R^N)$ we have,
\begin{multline}\label{ws}
\int_{\mathbf{R}^N}w(x)|\nabla u|^{p-2}|\nabla \varphi|^2 dx+(p-2)\int_{\mathbf{R}^N}w(x)|\nabla u|^{p-4}(\nabla u,\nabla \varphi)^2 dx\\-\int_{\mathbf{R}^N}g(x)f'(u)\varphi^{2}dx\geq 0
\end{multline}
Therefore if $u$ is a stable solution of equation $(\ref{mp})$ then we have,
\begin{equation}\label{ss}
\int_{\mathbf{R}^N}g(x)f'(u)\varphi^{2}dx\leq(p-1)\int_{\mathbb{R}^N}w(x)|\nabla u|^{p-2}|\nabla \varphi|^2 dx.
\end{equation}
\end{Def}

There has been a recent surge of interest to obtain non-existence results for stable solutions of $C^1$ class for the equation (\ref{mp}) starting from the influential papers of Farina \cite{Fa,Fa1,Fa2} and that of Wei-Ma \cite{JWei}. 
Those papers were followed by some other results concering singular and exponential nonlinearities which can be found in \cite{ChSoYa,DuGuo,GuoMei,Ph,HuYe,KP} and the reference therein.
\section{Notation}
\begin{enumerate}[label=\roman*)]
 \item The equation (\ref{mp}) will be denoted as $(n)_s$ and $(n)_e$ for $f(u)=-u^{-\delta}-u^{-\gamma}$ with $\delta,\gamma>0$ and $f(u)=-e^\frac{1}{u}$ respectively with $n=1,2,3,4$ and $B_{r}(0)$ is the ball centered at $0$ with radius $r>0$
 \item $c$ as a generic constant whose values may vary depending on the situation. If $c$ depends on $\epsilon$ we denote it by $c_\epsilon$
 \item For $p_1$, $p_2>0$ such that $p_1\geq 1$, by $||w||^{p_1}_{L^{p_1}(B_{2r}(0))}=o(r^{p_2})$, we mean there exist a fixed constant $c>0$ independent of $r$ such that $||w||^{p_1}_{L^{p_1}(B_{2r}(0))}\leq cr^{p_2}$
 \item  We say that $(\delta,p)$ belong to the class
 \begin{enumerate}
  \item $P_a$ if either $\delta\geq 1$ for $2\leq p<3$ or $\delta>1$ for $p=3$ or $\delta>\frac{(p-1)^2}{4}$ for $p>3$ respectively.
  \item $P_b$ if either $\delta>0$ for $p=2$ or $\delta>\frac{(p-1)^2}{4}$ for $p>2$ respectively.
 \end{enumerate}
\item Denote $s_p=\delta+\sqrt{\delta^{2}+\delta}$ and $\frac{2\delta}{(p-1)}-\frac{p-1}{2}$ for $p=2$ and $p>2$ respectively. Note that if $(\delta,p)$ belong to either $P_a$ or $P_b$, then we have $s_p>0$
\item Denote $S_p^{a}=1+\frac{2s_p}{p-1+\delta}$ and $S_p^{b}=1+\frac{2s_p}{p-1+\gamma}$ for $p\geq 2$
\item Denote $m=||g^{-1}||_{\infty}$
\item Let us fix a constant $M$ such that $M>0$ for $p=2$ and $0<M<\frac{4}{(p-1)^2}$ for $p>2$ respectively. Denote
\begin{equation}
 t_p=
 \begin{cases}
\frac{1}{M}+\sqrt{\frac{1}{M}+\frac{1}{M^2}}\;\mbox{for}\;p=2\\
 \frac{2}{M(p-1)}-\frac{p-1}{2}\;\mbox{for}\;p>2
 \end{cases}
 \end{equation}

\item Define the number $T_p=1+\frac{2t_p}{p}$ for $p\geq 2$. Observe that $t_p>0$ for any $p\geq 2$ and therefore $T_p>1$
\end{enumerate}

\section{Main Results}\label{mr}
We start this section by stating our main results
\begin{The}(\textbf{Cacciopolli type Estimate})\label{smain1}
Let $u\in C^{1}(\mathbb{R}^N)$ be a positive stable solution to the problem $(\ref{mp})_s$.  
Then the following holds:\\
(A) Let $(\delta,p)$ belong to the class $P_a$ such that $\delta<\gamma$. Assume that $0<u\leq 1$ in $\mathbb{R}^N$. Then for any $\beta\in(0,s_p)$, there exists a constant $c=c(\beta,p,m)>0$ 
such that for every $\psi\in C_c^{1}(\mathbb{R}^N)$ with $0\leq\psi\leq 1$ in $\mathbb{R}^N$, we have 
\begin{equation}\label{smineq1}
\int_{\mathbb{R}^N} g(x)(\frac{\psi}{u})^{2\beta+p-1+\delta} \,dx\leq c\int_{\mathbb{R}^N} w^{\theta_{a}'}|\nabla\psi|^{p{\theta_{a}'}} \,dx
\end{equation}
where $\theta_a=\frac{2\beta+p-1+\delta}{2\beta}$ and $\theta_{a}'=\frac{2\beta+p-1+\delta}{p-1+\delta}$.\\
(B) Let $(\delta,p)$ belong to the class $P_b$ such that $\delta<\gamma$ where $\gamma\geq 1$. Assume that $u\geq 1$ in $\mathbb{R}^N$. Then for any $\beta\in(0,s_p)$, there exists a constant $c=c(\beta,p,m)>0$ 
such that for every $\psi\in C_c^{1}(\mathbb{R}^N)$ with $0\leq\psi\leq 1$ in $\mathbb{R}^N$, we have 
\begin{equation}\label{smineq2}
\int_{\mathbb{R}^N} g(x)(\frac{\psi}{u})^{2\beta+p-1+\gamma} \,dx\leq c\int_{\mathbb{R}^N} w^{\theta_{b}'}|\nabla\psi|^{p{\theta_{b}'}} \,dx
\end{equation}
where $\theta_b=\frac{2\beta+p-1+\gamma}{2\beta}$ and $\theta_{b}'=\frac{2\beta+p-1+\gamma}{p-1+\gamma}$.\\ 
(C) Let $(\delta,p)$ belong to the class $P_a$ such that $\delta=\gamma\geq 1$. Assume that $u>0$ in $\mathbb{R}^N$. Then for any $\beta\in(0,s_p)$, there exists a constant $c=c(\beta,p,m)>0$
such that for every $\psi\in C_c^{1}(\mathbb{R}^N)$ with $0\leq\psi\leq 1$ in $\mathbb{R}^N$, we have 
\begin{equation}\label{smineq3}
\int_{\mathbb{R}^N} g(x)(\frac{\psi}{u})^{2\beta+p-1+\delta} \,dx\leq c\int_{\mathbb{R}^N} w^{\theta_{a}'}|\nabla\psi|^{p{\theta_{a}'}} \,dx
\end{equation}
where $\theta_a=\frac{2\beta+p-1+\delta}{2\beta}$ and $\theta_{a}'=\frac{2\beta+p-1+\delta}{p-1+\delta}$. 
\end{The}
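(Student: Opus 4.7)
The plan is to adapt the Farina-type technique to this weighted singular setting: combine the stability inequality (\ref{ss}) with the weak form (\ref{ws1}) via carefully chosen test functions, then isolate the target integral $A := \int g(\psi/u)^{2\beta+p-1+\delta}$ by a coupled Young/Hölder estimate. I describe part (A) in detail; parts (B) and (C) follow by analogous manipulations.

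Set $a=\beta+(p-2)/2$ and $b=(2\beta+p-1+\delta)/2$, so that $2a+\delta+1=2b=2\beta+p-1+\delta$. Put
\[
A=\int g\, u^{-(2a+\delta+1)}\psi^{2b},\quad B=\int g\, u^{-(2a+\gamma+1)}\psi^{2b},
\]
\[
I=\int w|\nabla u|^p u^{-(2a+2)}\psi^{2b},\quad K=\int w|\nabla u|^{p-2}u^{-2a}\psi^{2b-2}|\nabla\psi|^2.
\]
First I test the stability inequality (\ref{ss}) with $\varphi=u^{-a}\psi^b$, expand $|\nabla\varphi|^2$, and absorb the cross term by $\epsilon$-Young to get
\[
\delta A+\gamma B \le \big((p-1)a^2+c\epsilon\big)I+c_\epsilon K.
\]
Next I test (\ref{ws1}) with $\varphi=u^{-(2a+1)}\psi^{2b}$ and use $\epsilon$-Young on the resulting cross term, obtaining $I \le (A+B)/((2a+1)-c\epsilon) + c_\epsilon K$. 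Substitution yields $\delta A+\gamma B\le \kappa(A+B)+C_\epsilon K$ with $\kappa\to (p-1)a^2/(2a+1)$ as $\epsilon\to 0$. The hypothesis $(\delta,p)\in P_a$ with $\beta\in(0,s_p)$ is calibrated so that $\delta>\kappa$ for suitably small $\epsilon$; since $\gamma>\delta>\kappa$, both coefficients on the LHS are strictly positive and one concludes $A\le CK$.

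Next I convert $K$ into something $\psi$-dependent. Young's inequality with conjugate exponents $p/(p-2)$ and $p/2$ gives
\[
K\le \epsilon' I+C_{\epsilon'}\int w\, u^{-2\beta}\psi^{2b-p}|\nabla\psi|^p,
\]
using $p-2a-2=-2\beta$. Re-absorbing the $I$-term through the previous step for $\epsilon'$ small leaves $A\le C\int w\,u^{-2\beta}\psi^{2b-p}|\nabla\psi|^p$. A Hölder inequality with conjugate exponents $\theta_a$ and $\theta_a'$, chosen precisely so that $2b/\theta_a=2\beta$ and $2b-p-2b/\theta_a=\delta-1$, then gives
\[
\int w\, u^{-2\beta}\psi^{2b-p}|\nabla\psi|^p \le A^{1/\theta_a}\left(\int g^{-\theta_a'/\theta_a} w^{\theta_a'}\psi^{(\delta-1)\theta_a'}|\nabla\psi|^{p\theta_a'}\right)^{1/\theta_a'}.
\]
Using $\|g^{-1}\|_\infty=m$ and the fact that $\delta\ge 1$ in class $P_a$ (so $\psi^{(\delta-1)\theta_a'}\le 1$), together with the identity $1-1/\theta_a=1/\theta_a'$, yields (\ref{smineq1}) after dividing both sides by $A^{1/\theta_a}$.

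Parts (B) and (C) follow the same template with minor changes. In (B), with $u\ge 1$ we have $u^{-\gamma}\le u^{-\delta}$, so the natural quantity to isolate is $B'=\int g\,u^{-(2a+\gamma+1)}\psi^{2b'}$ with $b'=(2\beta+p-1+\gamma)/2$; the Hölder step requires $\gamma\ge 1$ (assumed in (B)) to guarantee $\psi^{(\gamma-1)\theta_b'}\le 1$. In (C), the condition $\delta=\gamma$ merges $A$ and $B$ into a single integral and the absorption in the third step becomes immediate. The main obstacle will be the bookkeeping of the coupled absorption: one must verify that the two Young parameters $\epsilon,\epsilon'$ can be chosen together so that the net coefficient of $A+B$ on the right remains strictly below $\min(\delta,\gamma)$, and that the final constant depends only on $\beta,p,m$ as claimed.
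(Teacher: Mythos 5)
Your proposal is correct and follows essentially the same route as the paper: test the stability inequality with $u^{-(\beta+\frac{p}{2}-1)}\psi^{\text{(power)}}$ and the weak formulation with $u^{-(2\beta+p-1)}\psi^{\text{(power)}}$, absorb the cross terms by Young's inequality so that the condition $\beta\in(0,s_p)$ makes the coefficient $\delta-\frac{(p-1)(\beta+\frac{p}{2}-1)^2}{2\beta+p-1}$ positive, and then pass to the final estimate with the exponents $\theta_a,\theta_a'$ (resp.\ $\theta_b,\theta_b'$) using $\|g^{-1}\|_\infty=m$, $0\leq\psi\leq1$ and $\delta\geq1$ (resp.\ $\gamma\geq1$). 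The only differences are cosmetic — you build the higher power of $\psi$ into the test functions and finish with H\"older and division by $A^{1/\theta_a}$ (legitimate since $A$ is finite, and trivial if $A=0$), whereas the paper works with $\psi^{p}$, substitutes $\psi\mapsto\psi^{(2\beta+p-1+\delta)/p}$ afterwards and absorbs via Young; your $K$-conversion step should just note the trivial separate treatment of $p=2$, as the paper does.
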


\begin{The}\label{smain2}
Let $(\delta,p)$ belong to the class $P_a$ such that $\delta<\gamma$ and $u\in C^1(\mathbb{R}^N)$ such that $0<u\leq 1$ in $\mathbb{R}^N$. Assume that there exist a fixed constant $\lambda_p^{a}>0$
such that $$||w||^{S_p^{a}}_{L^{S_p^{a}}(B_{2R}(0))}=o(R^{\lambda_p^{a}})\;\;\mbox{and}\;\;\lambda_p^{a}<pS_p^{a}.$$\\
Then $u$ is not a stable solution to the problem $(\ref{mp})_s$.
\end{The}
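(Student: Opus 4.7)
The plan is to argue by contradiction. I would assume $u\in C^1(\mathbb{R}^N)$ with $0<u\le 1$ is a stable solution of $(\ref{mp})_s$, so that the Caccioppoli inequality (A) of Theorem~\ref{smain1} is at our disposal for every $\beta\in(0,s_p)$. I would fix such a $\beta$ once and for all (no limiting procedure in $\beta$ is needed) and test the inequality against a standard cutoff $\psi_R\in C_c^1(\mathbb{R}^N)$ with $0\le\psi_R\le 1$, $\psi_R\equiv 1$ on $B_R(0)$, $\operatorname{supp}(\psi_R)\subset B_{2R}(0)$, and $|\nabla\psi_R|\le 2/R$. The strategy is then to show that the left-hand side of \eqref{smineq1} grows at least like $R^N$ while the hypothesis on $\|w\|_{L^{S_p^a}(B_{2R})}$ forces the right-hand side to grow strictly slower, yielding a contradiction as $R\to\infty$.

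For the lower bound I would use $u\le 1$ (so $u^{-(2\beta+p-1+\delta)}\ge 1$) together with the pointwise bound $g\ge 1/m$ coming from $\|g^{-1}\|_\infty=m$, to get
$$\int_{\mathbb{R}^N}g(x)\Big(\frac{\psi_R}{u}\Big)^{2\beta+p-1+\delta}\,dx\;\ge\;\frac{1}{m}\int_{B_R}dx\;\ge\;c\,R^N.$$
For the upper bound the key manipulation is to decouple $w$ from $\nabla\psi_R$ by Hölder. Since $\beta<s_p$ implies $\theta_a'<S_p^a$, the conjugate pair $\big(S_p^a/\theta_a',\,S_p^a/(S_p^a-\theta_a')\big)$ is admissible, and combined with $|\nabla\psi_R|\le 2/R$ on $B_{2R}(0)$ it gives
$$\int_{\mathbb{R}^N}w^{\theta_a'}|\nabla\psi_R|^{p\theta_a'}\,dx\;\le\;c\,R^{-p\theta_a'}\Big(\int_{B_{2R}}w^{S_p^a}\,dx\Big)^{\theta_a'/S_p^a}|B_{2R}|^{\,1-\theta_a'/S_p^a}.$$

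Invoking the hypothesis $\|w\|^{S_p^a}_{L^{S_p^a}(B_{2R})}\le c\,R^{\lambda_p^a}$ together with $|B_{2R}|\le c\,R^N$, the right-hand side of the Caccioppoli estimate becomes $\le c\,R^{E(\beta)}$ with $E(\beta)=-p\theta_a'+\lambda_p^a\theta_a'/S_p^a+N(1-\theta_a'/S_p^a)$. Combining with the lower bound then gives $R^N\le c\,R^{E(\beta)}$, which after elementary algebra simplifies to
$$R^{(\theta_a'/S_p^a)\,(pS_p^a+N-\lambda_p^a)}\;\le\;c.$$
Since $\lambda_p^a<pS_p^a$ by hypothesis (a fortiori $\lambda_p^a<pS_p^a+N$) and $\theta_a',S_p^a,N>0$, the exponent is strictly positive; letting $R\to\infty$ produces the contradiction.

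I do not anticipate a genuine obstacle here: the argument reduces to Theorem~\ref{smain1}(A), a Hölder decoupling, and the volume-growth assumption on $w$. The only point that needs care is checking $\theta_a'<S_p^a$ so that the chosen Hölder exponents are legitimate, which is precisely the content of $\beta\in(0,s_p)$. One could in principle try to sharpen the threshold by letting $\beta\to s_p$, but the hypothesis $\lambda_p^a<pS_p^a$ already provides more than enough slack to close the argument for any fixed admissible $\beta$.
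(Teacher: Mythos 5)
Your proposal is correct and follows the same skeleton as the paper's proof: argue by contradiction, invoke Theorem~\ref{smain1}(A), insert the standard cutoff $\psi_R$, apply H\"older with exponents $S_p^a/\theta_a'$ and its conjugate (legitimate precisely because $\beta<s_p$ gives $\theta_a'<S_p^a$), and use the growth hypothesis on $\|w\|_{L^{S_p^a}(B_{2R})}$. The one genuine difference is in the endgame, and it is to your credit. The paper writes $\int_{B_{2R}}w^{\theta_a'}\,dx\le c\big(\int_{B_{2R}}w^{S_p^a}\,dx\big)^{\theta_a'/S_p^a}$ with $c$ independent of $R$, i.e.\ it silently drops the H\"older volume factor $|B_{2R}|^{1-\theta_a'/S_p^a}$, and then concludes by sending the right-hand side to zero while noting the left-hand side is a positive integral. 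You keep that volume factor, which costs you an extra $R^{N(1-\theta_a'/S_p^a)}$ on the right, but you compensate by lower-bounding the left-hand side by $c\,R^N$ via $u\le1$ and $g\ge 1/m$; the resulting exponent comparison reduces to $\lambda_p^a<pS_p^a+N$, which is implied by the stated hypothesis $\lambda_p^a<pS_p^a$. So your version is the more scrupulous one: it closes a small bookkeeping gap in the paper's H\"older step at no cost to the hypotheses, and the algebra $N-E(\beta)=(\theta_a'/S_p^a)(pS_p^a+N-\lambda_p^a)>0$ checks out. No gaps.
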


\begin{The}\label{smain3}
Let $(\delta,p)$ belong to the class $P_b$ such that $\delta<\gamma$ with $\gamma\geq 1$ and $u\in C^1(\mathbb{R}^N)$ such that $u\geq 1$ in $\mathbb{R}^N$. 
Assume that there exist a fixed constant $\lambda_p^{b}>0$ such that $$||w||^{S_p^{b}}_{L^{S_p^{b}}(B_{2R}(0))}=o(R^{\lambda_p^{b}})\;\;\mbox{and}\;\;\lambda_p^{b}<pS_p^{b}.$$\\
Then $u$ is not a stable solution to the problem $(\ref{mp})_s$. 
\end{The}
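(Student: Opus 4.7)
\noindent\textbf{Proof plan for Theorem \ref{smain3}.}

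The plan is to apply the Caccioppoli estimate \eqref{smineq2} of Theorem \ref{smain1}(B) with a family of standard cutoff functions and then push the radius to infinity. For each $R > 0$ I take $\psi_R \in C_c^1(\mathbb{R}^N)$ with $0 \leq \psi_R \leq 1$, $\psi_R \equiv 1$ on $B_R(0)$, $\psi_R \equiv 0$ outside $B_{2R}(0)$, and $|\nabla \psi_R| \leq C/R$. Inserting $\psi_R$ into \eqref{smineq2} and discarding the portion of the left-hand integral supported outside $B_R(0)$ yields
\begin{equation*}
\int_{B_R(0)} g(x)\, u^{-(2\beta + p - 1 + \gamma)} \, dx \;\leq\; \frac{c}{R^{p\theta_b'}} \int_{B_{2R}(0)} w^{\theta_b'} \, dx.
\end{equation*}

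To bring in the weighted norm $\|w\|_{L^{S_p^b}}$, I apply H\"older's inequality with conjugate exponents $S_p^b/\theta_b'$ and $S_p^b/(S_p^b - \theta_b')$; this is legitimate because $\theta_b' = 1 + 2\beta/(p-1+\gamma) < 1 + 2s_p/(p-1+\gamma) = S_p^b$ whenever $\beta < s_p$. Combined with the growth hypothesis $\|w\|_{L^{S_p^b}(B_{2R}(0))}^{S_p^b} \leq c R^{\lambda_p^b}$, this yields
\begin{equation*}
\int_{B_{2R}(0)} w^{\theta_b'} \, dx \;\leq\; \|w\|_{L^{S_p^b}(B_{2R}(0))}^{\theta_b'} \, |B_{2R}(0)|^{\,1 - \theta_b'/S_p^b} \;\leq\; c\, R^{\,\lambda_p^b \theta_b'/S_p^b + N(1 - \theta_b'/S_p^b)}.
\end{equation*}
Putting the two displays together, the right-hand side of the Caccioppoli estimate is controlled by $c R^{E(\beta)}$, where
\begin{equation*}
E(\beta) \;=\; -p\theta_b' + \frac{\lambda_p^b \theta_b'}{S_p^b} + N\Big(1 - \frac{\theta_b'}{S_p^b}\Big).
\end{equation*}

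As $\beta \to s_p^-$ we have $\theta_b'/S_p^b \to 1$, and therefore $E(\beta) \to \lambda_p^b - p S_p^b$, which is strictly negative by hypothesis. I would fix $\beta \in (0, s_p)$ close enough to $s_p$ so that $E(\beta) < 0$, and then let $R \to \infty$. The right-hand side vanishes in the limit, forcing $\int_{\mathbb{R}^N} g(x) u^{-(2\beta + p - 1 + \gamma)} \, dx = 0$. Since $u \in C^1(\mathbb{R}^N)$ satisfies $1 \leq u < \infty$ everywhere, the integrand $g(x) u^{-(2\beta + p - 1 + \gamma)}$ is strictly positive on the full-measure set $\{g > 0\}$, yielding the desired contradiction and ruling out stability.

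The main obstacle is the algebraic bookkeeping in the exponent $E(\beta)$: one must verify that the H\"older defect $N(1 - \theta_b'/S_p^b)$ is genuinely absorbed as $\beta \nearrow s_p$, and confirm that the strict inequality $\lambda_p^b < p S_p^b$ in the hypothesis is precisely what guarantees the existence of a viable $\beta$ for which $E(\beta) < 0$. The remaining ingredients—construction of the cutoffs, substitution into part (B) of Theorem \ref{smain1}, and the final positivity argument—are routine.
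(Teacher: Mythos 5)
Your proposal is correct and follows the same overall strategy as the paper: argue by contradiction, insert the standard cutoffs $\psi_R$ into the Caccioppoli estimate of Theorem \ref{smain1}(B), apply H\"older's inequality with exponents $S_p^b/\theta_b'$ and its conjugate, and let $R\to\infty$ using $\lambda_p^b<pS_p^b$. The one substantive difference is in the H\"older step: the paper writes $\int_{B_{2R}}w^{\theta_b'}\,dx\leq c\big(\int_{B_{2R}}w^{S_p^b}\,dx\big)^{\theta_b'/S_p^b}$ with $c$ independent of $R$, silently dropping the volume factor $|B_{2R}|^{1-\theta_b'/S_p^b}\sim R^{N(1-\theta_b'/S_p^b)}$, whereas you retain it and then absorb the resulting defect $N(1-\theta_b'/S_p^b)$ by sending $\beta\nearrow s_p$, so that $\theta_b'\to S_p^b$ and the total exponent $E(\beta)$ tends to $\lambda_p^b-pS_p^b<0$. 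Your version is the more careful one --- as written, the paper's inequality fails already for $w\equiv 1$ --- and it makes explicit why the hypothesis $\lambda_p^b<pS_p^b$ is exactly what is needed; the price is that the final integral identity holds only for $\beta$ sufficiently close to $s_p$, which is all the contradiction requires.
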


\begin{The}\label{smain4}
Let $(\delta,p)$ belong to the class $P_a$ such that $\delta=\gamma\geq 1$ and $u\in C^1(\mathbb{R}^N)$ such that $u>0$ in $\mathbb{R}^N$. Assume that there exist a fixed constant $\lambda_p^{c}>0$ 
such that $$||w||^{S_p^{a}}_{L^{S_p^{a}}(B_{2R}(0))}=o(R^{\lambda_p^{c}})\;\;\mbox{and}\;\;\lambda_p^{c}<pS_p^{a}.$$
Then $u$ is not a stable solution to the problem $(\ref{mp})_s$. 
\end{The}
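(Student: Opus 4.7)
The strategy is to argue by contradiction using Theorem~\ref{smain1}(C) together with the familiar capacity-type test function argument of Farina and Wei--Ma. Assume $u$ is a stable solution, fix $\beta \in (0, s_p)$ (to be specified below), and for $R > 0$ choose a standard cutoff $\psi_R \in C_c^{1}(\mathbb{R}^N)$ satisfying $0 \leq \psi_R \leq 1$, $\psi_R \equiv 1$ on $B_R(0)$, $\operatorname{supp} \psi_R \subset B_{2R}(0)$, and $|\nabla \psi_R| \leq C/R$. Inserting $\psi_R$ into \eqref{smineq3} and restricting the left-hand integral to $B_R(0)$ yields
\[
\int_{B_R(0)} g(x)\, u^{-(2\beta+p-1+\delta)} \, dx \ \leq\ \frac{c}{R^{p\theta_a'}} \int_{B_{2R}(0)} w^{\theta_a'} \, dx.
\]

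Since $\beta < s_p$ implies $\theta_a' < S_p^a$, I would apply H\"older's inequality with exponent $S_p^a / \theta_a'$ to the right-hand side, together with the growth hypothesis $\|w\|_{L^{S_p^a}(B_{2R})}^{S_p^a} \leq c R^{\lambda_p^c}$ and $|B_{2R}| \leq c R^N$. This produces a bound of the form
\[
\int_{B_R(0)} g(x)\, u^{-(2\beta+p-1+\delta)} \, dx \ \leq\ C\, R^{\alpha(\beta)}, \qquad \alpha(\beta) := -p\theta_a' + \frac{\lambda_p^c \theta_a'}{S_p^a} + N\Bigl(1 - \frac{\theta_a'}{S_p^a}\Bigr).
\]

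Now letting $\beta \nearrow s_p$ forces $\theta_a' \to S_p^a$, so $\alpha(\beta) \to -pS_p^a + \lambda_p^c$, which is strictly negative by hypothesis. By continuity I can therefore fix a single $\beta \in (0, s_p)$ close enough to $s_p$ for which $\alpha(\beta) < 0$, and then the right-hand side vanishes as $R \to \infty$. The left-hand side is monotone non-decreasing in $R$ and is bounded below by $(\sup_{B_{R_0}} u)^{-(2\beta+p-1+\delta)} \int_{B_{R_0}} g\, dx > 0$ for any fixed $R_0 > 0$, since $u \in C^1$ is locally bounded, $g > 0$ a.e., and $g \in L^1_{loc}$. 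Letting $R \to \infty$ therefore yields the desired contradiction, and $u$ cannot be a stable solution.

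The argument is essentially the same as that which would prove Theorems~\ref{smain2} and~\ref{smain3}, differing only in which part of Theorem~\ref{smain1} is invoked; the hypothesis $\delta = \gamma \ge 1$ is used exclusively to apply part (C). The only potentially delicate point is the bookkeeping of exponents, which needs to be checked once to confirm that the lower-order correction $N(1 - \theta_a'/S_p^a)$ vanishes in the limit $\beta \to s_p^-$ and does not prevent $\alpha(\beta)$ from being made negative; this holds precisely because $\theta_a' = S_p^a$ at $\beta = s_p$.
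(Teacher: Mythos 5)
Your proposal is correct and follows essentially the same route as the paper: contradiction, the Caccioppoli estimate of Theorem~\ref{smain1}(C) with the standard cutoff $\psi_R$, H\"older's inequality against the $L^{S_p^a}$ growth hypothesis, and letting $R\to\infty$. The one place you diverge is actually an improvement in rigor: the paper writes $\int_{B_{2R}} w^{\theta_a'}\,dx \leq c\bigl(\int_{B_{2R}} w^{S_p^a}\,dx\bigr)^{\theta_a'/S_p^a}$, silently dropping the volume factor $|B_{2R}|^{1-\theta_a'/S_p^a}\sim R^{N(1-\theta_a'/S_p^a)}$ that H\"older actually produces, and then concludes for an arbitrary $\beta\in(0,s_p)$; you keep that factor and compensate by choosing $\beta$ close enough to $s_p$ so that $\theta_a'$ is close to $S_p^a$ and the total exponent $\alpha(\beta)$ is still negative, which is exactly the right way to close that gap. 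Your lower bound on the left-hand side via $\sup_{B_{R_0}}u<\infty$ and $g\geq 1/m>0$ a.e.\ is also a cleaner justification of the final contradiction than the paper's bare assertion that the integral cannot vanish.
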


\begin{Rem}\label{rem1}
For $w\in L^\infty(\mathbb{R}^N)$, we can choose $\lambda_p^{a},\lambda_p^{b},\lambda_p^{c}$ equal to the dimension $N$ of $\mathbb{R}^N$ in the above theorems $(\ref{smain2}),(\ref{smain3}),(\ref{smain4})$.
\end{Rem}

\begin{The}(\textbf{Cacciopolli type Estimate})\label{Main}
Let $u\in C^{1}(\mathbb{R}^N)$ be a bounded positive stable solution to the problem $(\ref{mp})_e$ such that $||u||_{L^{\infty}(\mathbb{R}^N)}\leq M$. Then for any $\beta\in(0,t_p)$, 
there exists a constant $c=c(\beta,p,m)>0$ such that for every $\psi\in C_c^{1}(\mathbb{R}^N)$ with $0\leq\psi\leq 1$ in $\mathbb{R}^N$, we have 
\begin{equation}\label{ineq1}
\int_{\mathbb{R}^N} g(x)(\frac{\psi}{u})^{2\beta+p} \,dx\leq c\int_{\mathbb{R}^N} w^{\theta'}|\nabla\psi|^{p{\theta'}} \,dx
\end{equation}
where $\theta=\frac{2\beta+p}{2\beta}$ and $\theta'=\frac{2\beta+p}{p}$
\end{The}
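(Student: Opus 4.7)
The plan is to follow the Farina-type strategy used for Theorem \ref{smain1}, combining the stability inequality (\ref{ss}) with the weak formulation (\ref{ws1}) through power-type test functions and closing with Young's and Hölder's inequalities. The key observation is that, since $f'(u) = e^{1/u}/u^2$, the bound $u \le M$ lets this play the role of the derivative of a singular nonlinearity with effective parameter $1/M$; this explains why the admissible range for $\beta$ in this theorem is obtained from the formula for $s_p$ in Theorem \ref{smain1} by the substitution $\delta \mapsto 1/M$.

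First I would plug the test function $\varphi = u^{-\alpha}\psi^{\nu}$ with $\alpha = \beta+(p-2)/2$ and $\nu = \beta+p/2$ into the stability inequality, chosen so that $2\alpha+2 = 2\nu = 2\beta+p$ and hence $f'(u)\varphi^2 = e^{1/u}u^{-(2\beta+p)}\psi^{2\beta+p}$. Expanding $|\nabla\varphi|^2$ produces three terms $J_1, J_2, J_3$ (gradient-only, cross term involving $\nabla u\cdot\nabla\psi$, and $|\nabla\psi|^2$-type). Then I would apply the weak formulation with $\varphi_0 = u^{-(2\beta+p-1)}\psi^{2\beta+p}$ to get the identity $(2\beta+p-1)J_1 - F_1 = (2\beta+p)J_2$, where $F_1 = \int g e^{1/u}u^{-(2\beta+p-1)}\psi^{2\beta+p}$. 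Using $\alpha+1 = \nu$ and the identity $2\alpha\nu/(2\beta+p) = \alpha$, substituting the expression for $J_2$ into the stability inequality collapses the $J_1$ coefficient to $-\alpha\nu$ and yields
\[
F_2 + (p-1)\alpha\nu\,J_1 \le (p-1)\alpha F_1 + (p-1)\nu^2 J_3,
\]
where $F_2 = \int g e^{1/u}u^{-(2\beta+p)}\psi^{2\beta+p}$. The boundedness $u \le M$ gives $F_1 \le M F_2$ on the level of integrands, allowing $F_1$ to be absorbed into $F_2$; this step requires $1 - (p-1)\alpha M > 0$, which (together with the Young's absorption below) is what pins down the admissible range $\beta \in (0, t_p)$.

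For $p > 2$ I would then apply Young's inequality with conjugate exponents $p/(p-2)$ and $p/2$ to $J_3$, producing $J_3 \le \epsilon J_1 + c_\epsilon K$ with $K = \int w u^{-2\beta}\psi^{2\beta}|\nabla\psi|^p$; choosing $\epsilon$ small enough absorbs $\epsilon J_1$ into the left, leaving $F_2 \le C K$. The pointwise bound $u^{p-2}e^{1/u} \ge (e/(p-2))^{p-2}$, obtained by minimizing $u \mapsto u^{p-2}e^{1/u}$ over $u > 0$ (trivially $e^{1/u}\ge 1$ for $p=2$), gives $F_2 \ge c_p\int g(\psi/u)^{2\beta+p}$. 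Finally I would apply Hölder's inequality with $\theta = (2\beta+p)/(2\beta)$ and $\theta' = (2\beta+p)/p$ to $K$, factoring its integrand so that one factor is $(g(\psi/u)^{2\beta+p})^{1/\theta}$, and absorbing the residuals $u^{(p-2)\theta'} \le M^{(p-2)\theta'}$ and $g^{-2\beta/p} \le m^{2\beta/p}$ into a constant. Since $u$ is continuous and positive on $\mathrm{supp}(\psi)$, $\int g(\psi/u)^{2\beta+p}$ is finite; dividing both sides by $(\int g(\psi/u)^{2\beta+p})^{1/\theta}$ and raising to the $\theta'$-th power yields (\ref{ineq1}). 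The main obstacle is ensuring the absorption of $F_1$ into $F_2$ and the Young's absorption of $\epsilon J_1$ into $(p-1)\alpha\nu J_1$ are simultaneously compatible: this compatibility is precisely encoded in the quadratic defining $t_p$ (namely $Mt^2 - 2t - 1 = 0$ for $p=2$ and its linear analogue for $p>2$), which in turn is the reason the hypothesis $M < 4/(p-1)^2$ appears for $p > 2$.
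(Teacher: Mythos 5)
Your overall architecture is sound and actually a bit cleaner than the paper's: you substitute the exact weak-form identity $(2\beta+p-1)J_1-F_1=(2\beta+p)J_2$ for the cross term instead of estimating it by Young's inequality, and your algebra (the collapse of the $J_1$ coefficient to $-(p-1)\alpha\nu$ and the coefficient $(p-1)\alpha$ on $F_1$) checks out. The paper instead proves a separate lemma giving $J_1\le\frac{1}{2\beta+p-1-\epsilon}\bigl(c_\epsilon K-\int gf(u)u^{-2\beta-p+1}\psi^p\bigr)$ from the weak formulation and feeds that into the stability inequality; the final Young step with $\theta,\theta'$ and the use of $e^{1/u}\ge 1/u$, $g^{-1}\in L^\infty$ are essentially the same in both arguments.

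However, there is a genuine gap in the range of $\beta$ you obtain. After substituting, you have $F_2+(p-1)\alpha\nu J_1\le(p-1)\alpha F_1+(p-1)\nu^2 J_3$, and you then bound $F_1\le MF_2$ and discard the positive term $(p-1)\alpha\nu J_1$ (using it only to absorb $\epsilon J_1$ from the Young estimate of $J_3$). This forces the condition $1-(p-1)\alpha M>0$, i.e. $\beta<\frac{1}{(p-1)M}-\frac{p-2}{2}$, which is a \emph{linear} constraint in $\beta$ and is strictly smaller than $(0,t_p)$: for $p=2$ it gives $\beta<1/M$ while $t_2=\frac{1}{M}+\sqrt{\frac{1}{M}+\frac{1}{M^2}}>\frac{2}{M}$, and for $p\ge 3$ one checks $\frac{1}{(p-1)M}-\frac{p-2}{2}<\frac{2}{(p-1)M}-\frac{p-1}{2}=t_p$ whenever $M<\frac{2}{p-1}$, which is automatic under $M<\frac{4}{(p-1)^2}$. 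So your proof does not establish the estimate for all $\beta\in(0,t_p)$ as the theorem requires; you assert that your conditions encode the quadratic $Mt^2-2t-1=0$, but your actual absorption step never produces it. The fix is to not throw away $J_1$: use the weak-form relation once more (or, as the paper does, first derive $J_1\le\frac{1}{2\beta+p-1-\epsilon}(c_\epsilon K+F_1)$ and substitute into the stability inequality) so that the coefficient in front of $F_1$ becomes $\frac{(p-1)(\beta+\frac{p}{2}-1)^2+\epsilon}{2\beta+p-1-\epsilon}$; the requirement $\frac{1}{M}>\frac{(p-1)(\beta+\frac{p}{2}-1)^2}{2\beta+p-1}$ is then exactly the (quadratic in $\beta$) condition satisfied on all of $(0,t_p)$. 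A minor further point: your lower bound for $F_2$ needs only $e^{1/u}\ge 1$; the detour through $u^{p-2}e^{1/u}\ge(e/(p-2))^{p-2}$ and the residual $u^{(p-2)\theta'}$ is not needed with your own definition of $K$, and the paper in fact uses $e^{1/u}>1/u$ to gain an extra power of $u^{-1}$ before the final Hölder/Young step.
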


\begin{The}\label{Main1}
Let $u\in C^{1}(\mathbb{R}^N)$ be positive such that $||u||_{L^\infty(\mathbb{R}^N)}\leq M$. 
Assume that there exist a fixed constant $\mu_p>0$ such that $$||w||^{T_p}_{L^{T_p}(B_{2R}(0))}=o(R^{\mu_p})\;\;\mbox{and}\;\;\mu_p<pT_p$$\\
Then $u$ is not a stable solution to the problem $(\ref{mp})_e$.
\end{The}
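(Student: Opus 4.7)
The plan is to argue by contradiction: assume $u$ is a stable solution of $(\ref{mp})_e$, feed a standard cutoff into the Caccioppoli-type estimate of Theorem \ref{Main}, and show that the lower bound on the left-hand side (polynomial growth in $R$) is incompatible with the upper bound on the right-hand side (which vanishes, or at least grows strictly slower than $R^N$, thanks to the hypothesis on $\|w\|_{L^{T_p}}$).

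\textbf{Step 1: Choose the test function.} Fix once and for all a $\beta\in(0,t_p)$ (say $\beta=t_p/2$), so that $\theta=\frac{2\beta+p}{2\beta}$ and $\theta'=\frac{2\beta+p}{p}<T_p$. Take a standard radial cutoff $\psi_R\in C_c^{1}(\mathbb{R}^N)$ with $\psi_R\equiv 1$ on $B_R(0)$, $\psi_R\equiv 0$ on $\mathbb{R}^N\setminus B_{2R}(0)$, $0\le\psi_R\le 1$, and $|\nabla\psi_R|\le C/R$, and apply Theorem \ref{Main} with $\psi=\psi_R$.

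\textbf{Step 2: Lower bound on the left-hand side.} Since $0<u\le M$ pointwise and $g^{-1}\in L^\infty(\mathbb{R}^N)$ with $\|g^{-1}\|_\infty=m$, we have $g\ge 1/m$ a.e., so
\[
\int_{\mathbb{R}^N}g(x)\Big(\frac{\psi_R}{u}\Big)^{2\beta+p}\,dx\ \ge\ \frac{1}{m\,M^{2\beta+p}}\,|B_R(0)|\ =\ c_1 R^N.
\]

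\textbf{Step 3: Upper bound on the right-hand side.} Using $|\nabla\psi_R|\le C/R$ supported in $B_{2R}(0)$,
\[
\int_{\mathbb{R}^N} w^{\theta'}|\nabla\psi_R|^{p\theta'}\,dx\ \le\ \Big(\frac{C}{R}\Big)^{p\theta'}\int_{B_{2R}(0)} w^{\theta'}\,dx.
\]
Since $\theta'<T_p$, Hölder's inequality with exponents $T_p/\theta'$ and its conjugate yields
\[
\int_{B_{2R}(0)} w^{\theta'}\,dx\ \le\ \Big(\int_{B_{2R}(0)} w^{T_p}\,dx\Big)^{\theta'/T_p}|B_{2R}(0)|^{1-\theta'/T_p}\ \le\ c\,R^{\mu_p\theta'/T_p+N(1-\theta'/T_p)},
\]
by the standing assumption $\|w\|_{L^{T_p}(B_{2R}(0))}^{T_p}=o(R^{\mu_p})$.

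\textbf{Step 4: Combine and pass to infinity.} Inserting Steps 2 and 3 into the Caccioppoli estimate gives
\[
c_1 R^N\ \le\ c_2\,R^{-p\theta'+\mu_p\theta'/T_p+N(1-\theta'/T_p)},
\]
which rearranges to
\[
R^{(\theta'/T_p)(N+pT_p-\mu_p)}\ \le\ \frac{c_2}{c_1}.
\]
The exponent $(\theta'/T_p)(N+pT_p-\mu_p)$ is strictly positive because $\theta'>0$, $N\ge 1$, and the hypothesis $\mu_p<pT_p$ forces $N+pT_p-\mu_p>0$. Letting $R\to\infty$ produces a contradiction, so no such stable $u$ can exist.

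The proof is a fairly direct consequence of Theorem \ref{Main} once the cutoff is chosen; the only subtlety is the mismatch between the exponent $\theta'$ arising in the Caccioppoli estimate (which must satisfy $\beta<t_p$, hence $\theta'<T_p$) and the exponent $T_p$ appearing in the growth hypothesis on $w$. This mismatch is bridged cleanly by Hölder's inequality in Step 3, and one should verify that the resulting exponent of $R$ on the right-hand side stays strictly below $N$ regardless of the particular $\beta\in(0,t_p)$ chosen—which is exactly the content of the algebraic identity $N-[-p\theta'+\mu_p\theta'/T_p+N(1-\theta'/T_p)]=(\theta'/T_p)(N+pT_p-\mu_p)$ used above.
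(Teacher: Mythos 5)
Your proof is correct, and while it follows the paper's broad strategy (contradiction, Theorem \ref{Main} with the standard cutoff $\psi_R$, H\"older to pass from the exponent $\theta'$ to $T_p$), your endgame is genuinely different and in fact more robust. The paper bounds the left side below simply by its value on $B_R(0)$ and argues that the right side tends to zero; in doing so it writes $\int_{B_{2R}}w^{\theta'}\,dx\leq c\big(\int_{B_{2R}}w^{T_p}\,dx\big)^{\theta'/T_p}$, silently dropping the H\"older volume factor $|B_{2R}(0)|^{1-\theta'/T_p}$. Once that factor is restored, the right side behaves like $R^{-p\theta'+\mu_p\theta'/T_p+N(1-\theta'/T_p)}$, whose exponent is \emph{not} negative for all $\beta\in(0,t_p)$ under the sole hypothesis $\mu_p<pT_p$; the ``right side $\to 0$'' conclusion would then require taking $\beta$ close to $t_p$. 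Your version avoids this entirely: by using $u\leq M$ and $g\geq 1/m$ to get the quantitative lower bound $c_1R^N$ on the left, the comparison reduces to the identity $N-\big[-p\theta'+\mu_p\theta'/T_p+N(1-\theta'/T_p)\big]=(\theta'/T_p)(N+pT_p-\mu_p)>0$, which holds for every admissible $\beta$. So your argument is essentially a repaired and strengthened form of the paper's proof; what it buys is that the H\"older step is applied correctly and the contradiction is obtained uniformly in $\beta\in(0,t_p)$.
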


\begin{Rem}\label{Cor1}
For $w\in L^{\infty}(\mathbb{R}^N)$, we may choose $\mu_p=N$ in theorem $(\ref{Main1})$.
\end{Rem}

\section{Proof of Main Theorems}
Before proving the results of Section \ref{mr}, we prove an important lemma.
\begin{Lem}\label{Lemma}
Assume $p\geq 2$ and let $u\in C^{1}(\mathbb{R}^N)$ be a positive stable solution to the problem $(\ref{mp})$. Fix a constant $\beta>0$. Then for every $\epsilon\in(0,2\beta+p-1)$, 
there exist a constant $c_\epsilon=c_{\epsilon}(\beta,p)>0$ such that for any nonnegative $\psi\in C_{c}^{1}(\mathbb{R}^N)$, we have  
\begin{multline}\label{e15}
\int_{\mathbb{R}^N} g(x)f'(u)u^{-2\beta-p+2}\psi^p \,dx
\leq c_\epsilon\int_{\mathbb{R}^N} w(x)u^{-2\beta}|\nabla\psi|^{p} \,dx\\
-\frac{(p-1)(\beta+\frac{p}{2}-1)^2+\epsilon}{2\beta+p-1-\epsilon}\int_{\mathbb{R}^N} g(x)f(u)u^{-2\beta-p+1}\psi^p \,dx 
\end{multline}
\end{Lem}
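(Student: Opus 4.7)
The plan is to combine the stability inequality (\ref{ss}) with a carefully chosen test function against the weak formulation (\ref{ws1}) of the equation, and then match parameters in two applications of Young's inequality.

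\textbf{Step 1: Apply stability to an ansatz test function.} I would plug $\varphi=u^{-\beta-\frac{p}{2}+1}\psi^{p/2}$ into the stability bound (\ref{ss}). Then $\varphi^{2}=u^{-2\beta-p+2}\psi^{p}$, which already matches the left-hand side of (\ref{e15}). Expanding
\[
\nabla\varphi=\Big(1-\beta-\tfrac{p}{2}\Big)u^{-\beta-\frac{p}{2}}\psi^{p/2}\nabla u+\tfrac{p}{2}\,u^{-\beta-\frac{p}{2}+1}\psi^{p/2-1}\nabla\psi,
\]
expanding $|\nabla\varphi|^{2}$ and multiplying by $w|\nabla u|^{p-2}$ produces three terms: a pure-gradient term with coefficient $(\beta+\tfrac{p}{2}-1)^{2}$, a cross term, and a pure-$\nabla\psi$ term. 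I would apply Young's inequality (with parameter $\delta_{1}$) to both the cross term and the $|\nabla u|^{p-2}|\nabla\psi|^{2}$ term against the scale $(|\nabla u|^{p}u^{-2\beta-p}\psi^{p},\,|\nabla\psi|^{p}u^{-2\beta})$; the correct exponents $(p/(p-1),p)$ and $((p-2)/p,2/p)$ are dictated by matching powers of $u$, and both reductions land on the same two model quantities. The outcome is
\[
\int g f'(u)u^{-2\beta-p+2}\psi^{p}\,dx\leq \bigl[(p-1)(\beta+\tfrac{p}{2}-1)^{2}+\delta_{1}\bigr]\,I+C_{\delta_{1}}\!\int w\,u^{-2\beta}|\nabla\psi|^{p}\,dx,
\]
where $I:=\int w\,u^{-2\beta-p}|\nabla u|^{p}\psi^{p}\,dx$.

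\textbf{Step 2: Control $I$ via the equation.} I would test the weak form (\ref{ws1}) with $\varphi=u^{-2\beta-p+1}\psi^{p}$. Since
\[
\nabla\varphi=-(2\beta+p-1)u^{-2\beta-p}\psi^{p}\nabla u+p\,u^{-2\beta-p+1}\psi^{p-1}\nabla\psi,
\]
the equation rearranges to
\[
(2\beta+p-1)\,I=p\!\int w\,u^{-2\beta-p+1}\psi^{p-1}|\nabla u|^{p-2}\nabla u\cdot\nabla\psi\,dx-\int g f(u)u^{-2\beta-p+1}\psi^{p}\,dx.
\]
Applying Young's inequality with conjugate exponents $(p/(p-1),p)$ to the cross integral, again against $(|\nabla u|^{p}u^{-2\beta-p}\psi^{p},\,|\nabla\psi|^{p}u^{-2\beta})$, absorbs a small multiple $\delta_{2}\,I$ into the left-hand side and yields
\[
(2\beta+p-1-\delta_{2})\,I\leq C_{\delta_{2}}\!\int w\,u^{-2\beta}|\nabla\psi|^{p}\,dx-\int g f(u)u^{-2\beta-p+1}\psi^{p}\,dx.
\]

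\textbf{Step 3: Combine and match the constant.} Substituting the bound on $I$ from Step~2 into Step~1 gives exactly a linear combination of $\int w\,u^{-2\beta}|\nabla\psi|^{p}$ and $-\int g f(u) u^{-2\beta-p+1}\psi^{p}$. Taking $\delta_{1}=\delta_{2}=\epsilon$ produces the precise coefficient $\dfrac{(p-1)(\beta+\frac{p}{2}-1)^{2}+\epsilon}{2\beta+p-1-\epsilon}$ in front of the $f$--integral, where the condition $\epsilon\in(0,2\beta+p-1)$ is exactly what makes the denominator positive. All remaining constants get absorbed into a single $c_\epsilon=c_\epsilon(\beta,p)$, giving (\ref{e15}).

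The main obstacle is bookkeeping: one must verify that, when computing the two Young's inequalities, the exponents and powers of $u,\psi,|\nabla u|,|\nabla\psi|$ on both sides agree so that everything collapses onto the same two model integrals, and then choose $\delta_{1},\delta_{2}$ so that the final constant takes the stated form. A secondary subtlety is the $(p-2)|\nabla u|^{p-4}(\nabla u,\nabla\varphi)^{2}$ term in (\ref{ws}); this is handled automatically by first passing to the inequality (\ref{ss}) via Cauchy--Schwarz, so that for $p\geq 2$ only the coefficient $p-1$ appears in Step~1.
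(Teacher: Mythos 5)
Your proposal is correct and follows essentially the same route as the paper: test the stability inequality \eqref{ss} with $\varphi=u^{-\beta-\frac{p}{2}+1}\psi^{p/2}$, test the weak form \eqref{ws1} with $u^{-(2\beta+p-1)}\psi^{p}$ to control $I=\int w\,u^{-2\beta-p}|\nabla u|^{p}\psi^{p}\,dx$, and combine via Young's inequality with the parameters matched so that the coefficient $\frac{(p-1)(\beta+\frac{p}{2}-1)^{2}+\epsilon}{2\beta+p-1-\epsilon}$ emerges. The only quibbles are cosmetic (your conjugate exponents for the $|\nabla u|^{p-2}|\nabla\psi|^{2}$ term should read $\bigl(\tfrac{p}{p-2},\tfrac{p}{2}\bigr)$, and for $p=2$ that term needs no Young step at all), neither of which affects the argument.
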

\begin{proof}[Proof of Lemma \ref{Lemma}]
Suppose $u\in C^{1}(\mathbb{R}^N)$ be a positive stable solution of equation $(\ref{mp})$ and $\psi\in C_c^{1}(\mathbb{R}^N)$ be nonnegative. We prove the lemma in two steps.\\
\textbf{Step 1.} 
Suppose $u\in C^1(\mathbb{R}^N)$ be a positive weak solution of $(\ref{mp})$ and let $\alpha>0$.
Choosing $\varphi=u^{-\alpha}\psi^p$ as a test function in the weak form $(\ref{ws1})$, since
$$
\nabla\varphi=-\alpha u^{-\alpha-1}\psi^p\nabla u+p\psi^{p-1}u^{-\alpha}\nabla\psi
$$
we obtain 
\begin{multline}\label{e11}
\alpha\int_{\mathbb{R}^N} w(x)u^{-\alpha-1}\psi^p|\nabla u|^{p} \,dx
\leq p\int_{\mathbb{R}^N} w(x)u^{-\alpha}\psi^{p-1}|\nabla u|^{p-1}|\nabla\psi| \,dx\\
-\int_{\mathbb{R}^N} g(x)u^{-\alpha}f(u)\psi^p \,dx
\end{multline}
Now using the Young's inequality for $\epsilon\in(0,\alpha)$, we obtain
\begin{multline*}
p\int_{\mathbb{R}^N} w(x)u^{-\alpha}\psi^{p-1}|\nabla u|^{p-1}|\nabla\psi| \,dx\\
=p\int_{\mathbb{R}^N}(w^\frac{1}{{p}'}u^{\frac{-\alpha-1}{{p}'}}|\nabla u|^{p-1}\psi^{p-1})(w^\frac{1}{p}u^\frac{p-\alpha-1}{p}|\nabla\psi|)dx\\
\leq \{\epsilon\int_{\mathbb{R}^N} w(x)u^{-\alpha-1}\psi^p|\nabla u|^{p} \,dx\\+c_{\epsilon}\int_{\mathbb{R}^N} w(x)u^{p-\alpha-1}|\nabla\psi|^{p}\} \,dx
\end{multline*}
Plugging this estimate in $(\ref{e11})$ and defining
$$
A=\int_{\mathbb{R}^N} w(x)u^{-\alpha-1}\psi^p|\nabla u|^{p} \,dx,
$$
 
$$
B=\int_{\mathbb{R}^N} w(x)u^{p-\alpha-1}|\nabla\psi|^{p} \,dx
$$
we obtain
\begin{equation}\label{e12}
(\alpha-\epsilon)A\leq c_{\epsilon}B-\int_{\mathbb{R}^N} g(x)u^{-\alpha}f(u)\psi^p \,dx
\end{equation}

\textbf{Step 2.} Suppose $u\in C^1(\mathbb{R}^N)$ be a positive stable solution of $(\ref{mp})$ and let $\beta>0$.
Choosing $\varphi=u^{-\beta-\frac{p}{2}+1}\psi^{\frac{p}{2}}$, as a test function in the stability equation $(\ref{ss})$, since
$$
\nabla\varphi=(-\beta-\frac{p}{2}+1)u^{-\beta-\frac{p}{2}}\psi^{\frac{p}{2}}\nabla u+\frac{p}{2}\psi^{\frac{p-2}{2}}u^{-\beta-\frac{p}{2}+1}\nabla\psi
$$
we obtain
\begin{multline*}
\int_{\mathbb{R}^N} g(x)f'(u)u^{-2\beta-p+2}\psi^p \,dx
\leq(p-1)(-\beta-\frac{p}{2}+1)^2\int_{\mathbb{R}^N} w(x)u^{-2\beta-p}\psi^p |\nabla u|^{p} \,dx\\
+(p-1)\frac{p^2}{4}\int_{\mathbb{R}^N} w(x)u^{-2\beta-p+2}\psi^{p-2}|\nabla u|^{p-2}|\nabla\psi|^2 \,dx\\+
(p-1)(-\beta-\frac{p}{2}+1)p\int_{\mathbb{R}^N} w(x)u^{-2\beta-p+1}\psi^{p-1}|\nabla u|^{p-1}|\nabla\psi| \,dx 
=(X+Y+Z)
\end{multline*}
where 
$$
X=(p-1)(-\beta-\frac{p}{2}+1)^2\int_{\mathbb{R}^N} w(x)u^{-2\beta-p}\psi^p |\nabla u|^{p} \,dx
$$
$$
Y=(p-1)\frac{p^2}{4}\int_{\mathbb{R}^N} w(x)u^{-2\beta-p+2}\psi^{p-2}|\nabla u|^{p-2}|\nabla\psi|^2 \,dx
$$
and 
$$
Z=(p-1)(-\beta-\frac{p}{2}+1)p\int_{\mathbb{R}^N} w(x)u^{-2\beta-p+1}\psi^{p-1}|\nabla u|^{p-1}|\nabla\psi| \,dx
$$
Therefore we have
\begin{equation}\label{e13}
\int_{\mathbb{R}^N} g(x)f'(u)u^{-2\beta-p+2}\psi^p \,dx\leq(X+Y+Z).
\end{equation}
Let $\alpha=2\beta+p-1$. Then we have $X=(\beta+\frac{p}{2}-1)^2(p-1)A$. Now we prove the estimate for $p=2$ and $p>2$ seperately in case a and case b.\\ 
\textbf{Case a.} Let $p=2$. Therefore
\begin{align*}
Y &=\int_{\mathbb{R}^N} w(x)u^{-2\beta}|\nabla\psi|^2 dx=B
\end{align*}
Also using the exponents $p{'}=\frac{p}{p-1}$ and $p$ in the Young's inequality, we obtain
\begin{align*}
Z&=-2\beta\int_{\mathbb{R}^N} w(x)u^{-2\beta-1}\psi|\nabla u||\nabla\psi| \,dx\\
&=-2\beta\int_{\mathbb{R}^N} (w^{\frac{1}{2}}\psi u^{-\beta-1}|\nabla u|)(w^{\frac{1}{2}}u^{-\beta}|\nabla\psi|) \,dx\\
&\leq\epsilon\int_{\mathbb{R}^N} w(x)u^{-2\beta-2}\psi^2|\nabla u|^{2} \,dx+c_\epsilon\int_{\mathbb{R}^N} w(x)u^{-2\beta}|\nabla\psi|^{2} \,dx\\
&=\epsilon A+c_\epsilon B.
\end{align*}
Now putting $\alpha=2\beta+1>0$ in equation (\ref{e12}) we get
\begin{equation}\label{eI}
A\leq\frac{1}{2\beta+1-\epsilon}\{c_\epsilon B-\int_{\mathbb{R}^N} g(x)f(u)u^{-2\beta-1}\psi^2 \,dx\}
\end{equation}
Using the inequality $(\ref{e14})$ together with the above estimates on $Y$ and $Z$ in $(\ref{e13})$, we get
\begin{multline*}
\int_{\mathbb{R}^N} g(x)f'(u)u^{-2\beta}\psi^2 \,dx
\leq c_\epsilon\big(\frac{\beta^2+\epsilon}{2\beta+1-\epsilon}+1+c_{\epsilon}\big) B\\
-\frac{\beta^2+\epsilon}{2\beta+1-\epsilon}\int_{\mathbb{R}^N} g(x)f(u)u^{-2\beta-1}\psi^2 \,dx
\end{multline*}
\textbf{Case b.} Let $p>2$.
and using the exponents $\frac{p}{p-2}$ and $\frac{p}{2}$ in the Young's inequality, we have
\begin{align*}
Y &=(p-1)\frac{p^2}{4}\int_{\mathbb{R}^N} w(x)u^{-2\beta-p+2}\psi^{p-2}|\nabla u|^{p-2}|\nabla\psi|^2 dx\\
&=(p-1)\frac{p^2}{4}\int_{\mathbb{R}^N}(w^\frac{p-2}{p}|\nabla u|^{p-2}\psi^{p-2}u^\frac{(-2\beta-p)(p-2)}{p})(w^\frac{2}{p}u^\frac{2(-2\beta)}{p}|\nabla\psi|^2) \,dx\\
&\leq\frac{\epsilon}{2}\int_{\mathbb{R}^N} w(x)u^{-2\beta-p}\psi^p|\nabla u|^{p} \,dx+\frac{c_\epsilon}{2}\int_{\mathbb{R}^N} w(x)u^{-2\beta}|\nabla\psi|^{p} \,dx\\
&=\frac{\epsilon}{2} A+\frac{c_\epsilon}{2} B
\end{align*}
Also using the exponents $p{'}=\frac{p}{p-1}$ and $p$ in the Young's inequality, we obtain
\begin{align*}
Z&=(p-1)(-\beta-\frac{p}{2}+1)p\int_{\mathbb{R}^N} w(x)u^{-2\beta-p+1}\psi^{p-1}|\nabla u|^{p-1}|\nabla\psi| \,dx\\
&=(p-1)(-\beta-\frac{p}{2}+1)p\int_{\mathbb{R}^N} (w^{\frac{1}{p'}}\psi^{p-1}u^y|\nabla u|^{p-1})(w^{\frac{1}{p}}u^{y'}|\nabla\psi|) \,dx\\
&\leq\frac{\epsilon}{2}\int_{\mathbb{R}^N} w(x)u^{-2\beta-p}\psi^p|\nabla u|^{p} \,dx+\frac{c_\epsilon}{2}\int_{\mathbb{R}^N} w(x)u^{-2\beta}|\nabla\psi|^{p} \,dx\\
&=\frac{\epsilon}{2} A+\frac{c_\epsilon}{2} B.
\end{align*}
where $y=\frac{(-2\beta-p)(p-1)}{p}$ and $y'={-2\beta-p+1+\frac{(2\beta+p)(p-1)}{p}}$
Now putting $\alpha=2\beta+p-1>0$ in equation (\ref{e12}) we get 
\begin{equation}\label{e14}
A\leq\frac{1}{2\beta+p-1-\epsilon}\{c_\epsilon B-\int_{\mathbb{R}^N} g(x)f(u)u^{-2\beta-p+1}\psi^p \,dx\}
\end{equation}
Using the inequality $(\ref{e14})$ together with the above estimates on $Y$ and $Z$ in $(\ref{e13})$, we get
\begin{multline*}
\int_{\mathbb{R}^N} g(x)f'(u)u^{-2\beta-p+2}\psi^p \,dx
\leq c_\epsilon\big(\frac{(p-1)(\beta+\frac{p}{2}-1)^2+\epsilon}{2\beta+p-1-\epsilon}+1\big) B\\
-\frac{(p-1)(\beta+\frac{p}{2}-1)^2+\epsilon}{2\beta+p-1-\epsilon}\int_{\mathbb{R}^N} g(x)f(u)u^{-2\beta-p+1}\psi^p \,dx
\end{multline*}
Hence the lemma follows.
\end{proof}

\begin{proof}[Proof of Theorem \ref{smain1}]
Let $u\in C^{1}(\mathbb{R}^N)$ be a positive stable solution to the problem $(\ref{mp})_s$. Then using the fact $0<\delta\leq\gamma$ and $f(u)=-u^{-\delta}-u^{-\gamma}$ in the inequality $(\ref{e15})$, we obtain
\begin{multline*}
\beta_\epsilon\int_{\mathbb{R}^N} g(x)\big(\frac{1}{u^\delta}+\frac{1}{u^\gamma}\big)u^{-2\beta-p+1}\psi^p \,dx\\
\leq c_\epsilon\int_{\mathbb{R}^N} w(x)u^{-2\beta}|\nabla \psi|^p dx
\end{multline*}
where $\beta_{\epsilon}=\big(\delta-\frac{(p-1)(\beta+\frac{p}{2}-1)^2+\epsilon}{2\beta+p-1-\epsilon}\big)$. Therefore, we have 
$\lim\limits_{\epsilon\to 0}\beta_{\epsilon}=\big(\delta-\frac{(p-1)(\beta+\frac{p}{2}-1)^2}{2\beta+p-1}\big)>0$ for every $\beta\in(0,s_p)$. So we can choose an $\epsilon\in(0,1)$ such that $\beta_{\epsilon}>0$
Hence we have
\begin{equation}\label{new}
\int_{\mathbb{R}^N} g(x)\big(\frac{1}{u^\delta}+\frac{1}{u^\gamma}\big)u^{-2\beta-p+1}\psi^p \,dx
\leq c\int_{\mathbb{R}^N} w(x)u^{-2\beta}|\nabla \psi|^p dx.
\end{equation}
(A) Since $\delta<\gamma$ and $0<u\leq 1$ in $\mathbb{R^N}$, we have for any $\beta\in(0,s_p)$, the inequality $(\ref{new})$ becomes
\begin{align*}
\int_{\mathbb{R}^N} g(x)u^{-2\beta-p+1-\delta}\psi^p \,dx
\leq c\int_{\mathbb{R}^N} w(x)u^{-2\beta}|\nabla \psi|^p dx
\end{align*}
Replacing $\psi$ by $\psi^\frac{2\beta+p-1+\delta}{p}$, we obtain
\begin{align*}
\int_{\mathbb{R}^N} g(x)(\frac{\psi}{u})^{2\beta+p-1+\delta}
&\leq c(\frac{2\beta+p-1+\delta}{p})^p\int_{\mathbb{R}^N}w(x)u^{-2\beta}\psi^{2\beta+\delta-1}|\nabla\psi|^p dx\\
&=c\int_{\mathbb{R}^N}(\frac{\psi}{u})^{2\beta}(w(x)|\nabla\psi|^p)\psi^{\delta-1} \,dx
\end{align*}
Choosing the exponents $\theta_a=\frac{2\beta+p-1+\delta}{2\beta}, \theta_{a}'=\frac{2\beta+p-1+\delta}{p-1+\delta}$ in the Young's inequality we get,
\begin{multline*}
\int_{\mathbb{R}^N} g(x)(\frac{\psi}{u})^{2\beta+p-1+\delta} \,dx 
\leq\{\epsilon\int_{\mathbb{R}^N} g(x)(\frac{\psi}{u})^{2\beta+p-1+\delta}dx\\+c_{\epsilon}\int_{\mathbb{R}^N}w^{\theta_{a}'}g^{-\frac{\theta_{a}'}{\theta_{a}}}\psi^{(\delta-1)\theta_{a}'}|\nabla\psi|^{p\theta_{a}'}dx\}
\end{multline*}
Using the fact $g^{-1}\in L^\infty(\mathbb{R}^N)$, $0\leq\psi\leq 1$ and $\delta\geq 1$, we get the inequality
\begin{align*}
\int_{\mathbb{R}^N} g(x)(\frac{\psi}{u})^{2\beta+p-1+\delta} \,dx
\leq c\int_{\mathbb{R}^N} w^{\theta_{a}'}|\nabla\psi|^{p\theta_{a}'}dx
\end{align*}

(B) Since $\delta<\gamma$ and $u\geq 1$ in $\mathbb{R^N}$, we have for any $\beta\in(0,s_p)$, the inequality $(\ref{new})$ becomes
\begin{align*}
\int_{\mathbb{R}^N} g(x)u^{-2\beta-p+1-\gamma}\psi^p \,dx
\leq c\int_{\mathbb{R}^N} w(x)u^{-2\beta}|\nabla \psi|^p dx
\end{align*}
Replacing $\psi$ by $\psi^\frac{2\beta+p-1+\gamma}{p}$ we obtain
\begin{align*}
\int_{\mathbb{R}^N} g(x)(\frac{\psi}{u})^{2\beta+p-1+\gamma}
&\leq c(\frac{2\beta+p-1+\gamma}{p})^p\int_{\mathbb{R}^N}w(x)u^{-2\beta}\psi^{2\beta+\gamma-1}|\nabla\psi|^p dx\\
&=c\int_{\mathbb{R}^N}(\frac{\psi}{u})^{2\beta}(w(x)|\nabla\psi|^p)\psi^{\gamma-1} \,dx
\end{align*}
Choosing the exponents $\theta_b=\frac{2\beta+p-1+\gamma}{2\beta}, \theta_{b}'=\frac{2\beta+p-1+\gamma}{p-1+\gamma}$ in the Young's inequality we get,
\begin{multline*}
\int_{\mathbb{R}^N} g(x)(\frac{\psi}{u})^{2\beta+p-1+\gamma} \,dx 
\leq\{\epsilon\int_{\mathbb{R}^N} g(x)(\frac{\psi}{u})^{2\beta+p-1+\gamma}dx\\+c_{\epsilon}\int_{\mathbb{R}^N}w^{\theta_{b}'}g^{-\frac{\theta_{b}'}{\theta_{b}}}\psi^{(\gamma-1)\theta_{b}'}|\nabla\psi|^{p\theta_{b}'}dx\}
\end{multline*}
Using the fact $g^{-1}\in L^\infty(\mathbb{R}^N)$, $0\leq\psi\leq 1$ and $\gamma\geq 1$, we get the inequality
\begin{equation}\label{Cac2}
\int_{\mathbb{R}^N} g(x)(\frac{\psi}{u})^{2\beta+p-1+\gamma} \,dx
\leq c\int_{\mathbb{R}^N} w^{\theta_{b}'}|\nabla\psi|^{p\theta_{b}'}dx
\end{equation}

(C) Since $\delta=\gamma$ and $u>0$ in $\mathbb{R^N}$, we have for any $\beta\in(0,s_p)$, the inequality $(\ref{new})$ becomes
\begin{align*}
\int_{\mathbb{R}^N} g(x)u^{-2\beta-p+1-\delta}\psi^p \,dx
\leq c\int_{\mathbb{R}^N} w(x)u^{-2\beta}|\nabla \psi|^p dx
\end{align*}
Replacing $\psi$ by $\psi^\frac{2\beta+p-1+\delta}{p}$, we obtain
\begin{align*}
\int_{\mathbb{R}^N} g(x)(\frac{\psi}{u})^{2\beta+p-1+\delta}
&\leq c(\frac{2\beta+p-1+\delta}{p})^p\int_{\mathbb{R}^N}w(x)u^{-2\beta}\psi^{2\beta+\delta-1}|\nabla\psi|^p dx\\
&=c\int_{\mathbb{R}^N}(\frac{\psi}{u})^{2\beta}(w(x)|\nabla\psi|^p)\psi^{\delta-1} \,dx
\end{align*}
Choosing the exponents $\theta_a=\frac{2\beta+p-1+\delta}{2\beta}, \theta_{a}'=\frac{2\beta+p-1+\delta}{p-1+\delta}$ in the Young's inequality we get,
\begin{multline*}
\int_{\mathbb{R}^N} g(x)(\frac{\psi}{u})^{2\beta+p-1+\delta} \,dx 
\leq\{\epsilon\int_{\mathbb{R}^N} g(x)(\frac{\psi}{u})^{2\beta+p-1+\delta}dx\\+c_{\epsilon}\int_{\mathbb{R}^N}w^{\theta_{a}'}g^{-\frac{\theta_{a}'}{\theta_{a}}}\psi^{(\delta-1)\theta_{a}'}|\nabla\psi|^{p\theta_{a}'}dx\}
\end{multline*}
Using the fact $g^{-1}\in L^\infty(\mathbb{R}^N)$, $0\leq\psi\leq 1$ and $\delta\geq 1$, we get the inequality
\begin{align*}
\int_{\mathbb{R}^N} g(x)(\frac{\psi}{u})^{2\beta+p-1+\delta} \,dx
\leq c\int_{\mathbb{R}^N} w^{\theta_{a}'}|\nabla\psi|^{p\theta_{a}'}dx
\end{align*} 
\end{proof}

\begin{proof}[Proof of Theorem \ref{smain2}]
By contradiction, let us suppose that $u$ be a positive stable solution to the problem $(\ref{mp})_s$. Then by part (A) of Theorem $\ref{smain1}$, we have
\begin{align*}
\int_{\mathbb{R}^N} g(x)(\frac{\psi_R}{u})^{2\beta+p-1+\delta} \,dx
\leq c\int_{\mathbb{R}^N} w^{\theta_{a}'}|\nabla\psi_R|^{p\theta_{a}'}dx
\end{align*}  
Choosing $\psi_{R}\in C_{c}^{1}(\mathbb{R}^N)$ such that $0\leq\psi_{R}\leq 1$ in $\mathbb{R}^N$, $\psi_{R}=1$ in $B_{R}(0)$ and $\psi_{R}=0$ in $\mathbb{R}^N\setminus B_{2R}(0)$ with 
$|\nabla\psi_R|\leq\frac{c}{R}$ for $c>1$ in $(\ref{ineq1})$, we obtain
\begin{equation}\label{smain2ineq}
\int_{B_{R}(0)} g(x)(\frac{1}{u})^{2\beta+p-1+\delta} \,dx\leq cR^{-p\theta_{a}'}\int_{B_{2R}(0)} w^{\theta_{a}'} \,dx
\end{equation}
Since $\theta_{a}'=\frac{2\beta+p-1+\delta}{p-1+\delta}<S_p^{a}$ and $||w||^{S_p^{a}}_{L^{S_p^{a}}(B_{2R}(0))}=o(R^{\lambda_p^{a}})$, we have 
$$
\int_{B_{2R}(0)} w^{\theta_{a}'} \,dx\leq c\big(\int_{B_{2R}(0)} w^{S_p^{a}} \,dx\big)^{\frac{\theta_{a}'}{S_p^{a}}}\leq c R^\frac{\theta_{a}'\lambda_p^{a}}{S_p^{a}}
$$ 
Hence from $(\ref{smain2ineq})$, we obtain  
$$
\int_{B_{R}(0)} g(x)(\frac{1}{u})^{2\beta+p-1+\delta} \,dx\leq cR^{{\theta'_a}(\frac{\lambda_p^{a}}{S_p^{a}}-p)}
$$
Since $\lambda_p^{a}<pS_p^{a}$, letting $R\to\infty$, we obtain
$$
\int_{\mathbb{R}^N} g(x)(\frac{1}{u})^{2\beta+p-1+\delta} \,dx=0
$$
which is a contradiction.
\end{proof}

\begin{proof}[Proof of Theorem \ref{smain3}]
By contradiction, let us suppose that $u$ be a positive stable solution to the problem $(\ref{mp})_s$. Then by part (B) of Theorem $\ref{smain1}$, we have
\begin{align*}
\int_{\mathbb{R}^N} g(x)(\frac{\psi_R}{u})^{2\beta+p-1+\gamma} \,dx
\leq c\int_{\mathbb{R}^N} w^{\theta_{b}'}|\nabla\psi_R|^{p\theta_{b}'}dx
\end{align*}  
Choosing $\psi_{R}\in C_{c}^{1}(\mathbb{R}^N)$ such that $0\leq\psi_{R}\leq 1$ in $\mathbb{R}^N$, $\psi_{R}=1$ in $B_{R}(0)$ and $\psi_{R}=0$ in $\mathbb{R}^N\setminus B_{2R}(0)$ 
with $|\nabla\psi_R|\leq\frac{c}{R}$ for $c>1$ in $(\ref{ineq1})$, we obtain
\begin{equation}\label{smain3ineq}
\int_{B_{R}(0)} g(x)(\frac{1}{u})^{2\beta+p-1+\gamma} \,dx\leq cR^{-p\theta_{b}'}\int_{B_{2R}(0)} w^{\theta_{b}'} \,dx
\end{equation}
Since $\theta_{b}'=\frac{2\beta+p-1+\delta}{p-1+\delta}<S_p^{b}$ and $||w||^{S_p^{b}}_{L^{S_p^{b}}(B_{2R}(0))}=o(R^{\lambda_p^{b}})$, we have 
$$
\int_{B_{2R}(0)} w^{\theta_{b}'} \,dx\leq c\big(\int_{B_{2R}(0)} w^{S_p^{b}} \,dx\big)^{\frac{\theta_{b}'}{S_p^{b}}}\leq c R^\frac{\theta_{b}'\lambda_p^{b}}{S_p^{b}}
$$ 
Hence from $(\ref{smain3ineq})$, we obtain  
$$
\int_{B_{R}(0)} g(x)(\frac{1}{u})^{2\beta+p-1+\gamma} \,dx\leq cR^{{\theta_{b}'}(\frac{\lambda_p^{b}}{S_p^{b}}-p)}
$$
Since $\lambda_p^{b}<pS_p^{b}$, letting $R\to\infty$, we obtain
$$
\int_{\mathbb{R}^N} g(x)(\frac{1}{u})^{2\beta+p-1+\gamma} \,dx=0
$$
which is a contradiction.
\end{proof}

\begin{proof}[Proof of Theorem \ref{smain4}]
By contradiction, let us suppose that $u$ be a positive stable solution to the problem $(\ref{mp})_s$. Then by part (C) of Theorem $\ref{smain1}$, we have
\begin{align*}
\int_{\mathbb{R}^N} g(x)(\frac{\psi_R}{u})^{2\beta+p-1+\delta} \,dx
\leq c\int_{\mathbb{R}^N} w^{\theta_{a}'}|\nabla\psi|^{p\theta_{a}'}dx.
\end{align*}  
Choosing $\psi_{R}\in C_{c}^{1}(\mathbb{R}^N)$ such that $0\leq\psi_{R}\leq 1$ in $\mathbb{R}^N$, $\psi_{R}=1$ in $B_{R}(0)$ and $\psi_{R}=0$ in $\mathbb{R}^N\setminus B_{2R}(0)$ 
with $|\nabla\psi_R|\leq\frac{c}{R}$ for $c>1$ in $(\ref{ineq1})$, we obtain
\begin{equation}\label{smain4ineq}
\int_{B_{R}(0)} g(x)(\frac{1}{u})^{2\beta+p-1+\delta} \,dx\leq cR^{-p\theta_{a}'}\int_{B_{2R}(0)} w^{\theta_{a}'} \,dx
\end{equation}
Since $\theta_{a}'=\frac{2\beta+p-1+\delta}{p-1+\delta}<S_p^{a}$ and $||w||^{S_p^{a}}_{L^{S_p^{a}}(B_{2R}(0))}=o(R^{\lambda_p^{c}})$, we have 
$$
\int_{B_{2R}(0)} w^{\theta_{a}'} \,dx\leq c\big(\int_{B_{2R}(0)} w^{S_p^{a}} \,dx\big)^{\frac{\theta_{a}'}{S_p^{a}}}\leq c R^\frac{\theta_{a}'\lambda_p^{c}}{S_p^{a}}
$$ 
Hence from $(\ref{smain4ineq})$, we obtain  
$$
\int_{B_{R}(0)} g(x)(\frac{1}{u})^{2\beta+p-1+\delta} \,dx\leq cR^{{\theta_{a}'}(\frac{\lambda_p}{S_p^{a}}-p)}
$$
Since $\lambda_p^{c}<pS_p^{a}$, letting $R\to\infty$, we obtain
$$
\int_{\mathbb{R}^N} g(x)(\frac{1}{u})^{2\beta+p-1+\delta} \,dx=0
$$
which is a contradiction.
\end{proof}

\begin{proof}[Proof of Theorem \ref{Main}]
Let $u\in C^{1}(\mathbb{R}^N)$ be a bounded positive stable solution to the problem $(\ref{mp})_e$ such that $||u||_{L^\infty(\mathbb{R}^N)}\leq M$.
Using the condition $0<u\leq M$ in $\mathbb{R}^N$ and $f(u)=-e^\frac{1}{u}$ in the inequality $(\ref{e15})$, we obtain
\begin{align*}
\beta_\epsilon\int_{\mathbb{R}^N} g(x)e^\frac{1}{u}u^{-2\beta-p+1}\psi^p \,dx
\leq c_{\epsilon}\int_{\mathbb{R}^N} w(x)u^{-2\beta}|\nabla \psi|^p dx.
\end{align*}
Define $\beta_{\epsilon}=\big(\frac{1}{M}-\frac{(p-1)(\beta+\frac{p}{2}-1)^2+\epsilon}{2\beta+p-1-\epsilon}\big)$. Therefore, we have $\lim\limits_{\epsilon\to 0}\beta_{\epsilon}=\big(\frac{1}{M}-\frac{(p-1)(\beta+\frac{p}{2}-1)^2}{2\beta+p-1}\big)>0$ 
for every $\beta\in(0,t_p)$.\\
Therefore we can choose an $\epsilon>0$ such that $\beta_{\epsilon}>0$.
Hence using the inequality $e^\frac{1}{u}>\frac{1}{u}$, we obtain
\begin{equation*}
\int_{\mathbb{R}^N} g(x)u^{-2\beta-p}\psi^p \,dx\leq c\int_{\mathbb{R}^N}w(x)u^{-2\beta}|\nabla \psi|^p dx
\end{equation*}
Replacing $\psi$ by $\psi^\frac{2\beta+p}{p}$ and using Young's inequality with $\theta=\frac{2\beta+p}{2\beta}$ and $\theta'=\frac{2\beta+p}{p}$, we have
\begin{align*}
\int_{\mathbb{R}^N} g(x)(\frac{\psi}{u})^{2\beta+p} \,dx
&\leq c\int_{\mathbb{R}^N}w(x)u^{-2\beta}\psi^{2\beta}|\nabla\psi|^p dx\\
&=c\int_{\mathbb{R}^N} g^{\frac{1}{\theta}}(\frac{\psi}{u})^{2\beta}(g^{-\frac{1}{\theta}}w(x)|\nabla\psi|^p) dx\\
&\leq\epsilon\int_{\mathbb{R}^N} g(x)(\frac{\psi}{u})^{2\beta+p} \,dx+c_{\epsilon}\int_{\mathbb{R}^N} g^{-\frac{\theta'}{\theta}}w^{\theta'}|\nabla\psi|^{p\theta{'}} \,dx
\end{align*}
Therefore we get the inequality
$$
\int_{\mathbb{R}^N} g(x)(\frac{\psi}{u})^{2\beta+p} \,dx
\leq c\int_{\mathbb{R}^N} w^{\theta'}g^{-\frac{\theta'}{\theta}}|\nabla\psi|^{p\theta'}dx
$$
Since $g^{-1}\in L^{\infty}(\mathbb{R}^N)$, we have
$$
\int_{\mathbb{R}^N} g(x)(\frac{\psi}{u})^{2\beta+p} \,dx
\leq c\int_{\mathbb{R}^N} w^{\theta'}|\nabla\psi|^{p\theta'}dx
$$
\end{proof}

\begin{proof}[Proof of Theorem \ref{Main1}]
By contradiction, let us suppose that $u$ be a positive stable solution to the problem $(\ref{mp})_e$. Then by Theorem $\ref{Main}$, we have
$$
\int_{\mathbb{R}^N} g(x)(\frac{\psi}{u})^{2\beta+p} \,dx
\leq c\int_{\mathbb{R}^N} w^{\theta'}|\nabla\psi|^{p\theta'}dx
$$ 
where $\theta=\frac{2\beta+p}{2\beta}$ and $\theta_{'}=\frac{2\beta+p}{p}$.
Choosing $\psi\in C_{c}^{1}(\mathbb{R}^N)$ such that $0\leq\psi_{R}\leq 1$ in $(\mathbb{R}^N)$, $\psi_{R}=1$ in $B_{R}(0)$, 
and $\psi=0$ in $\mathbb{R}^N\setminus B_{2R}(0)$ with $|\nabla\psi|\leq\frac{c}{R}$ for $c>1$ in $(\ref{ineq1})$, we obtain
\begin{equation}\label{main1ineq}
\int_{B_{R}(0)} g(x)(\frac{1}{u})^{2\beta+p} \,dx\leq cR^{-p\theta{'}}\int_{B_{2R}(0)} w^{\theta'} \,dx
\end{equation}
Since $\theta{'}=\frac{2\beta+p}{p}<T_p$ and $||w||^{T_p}_{L^{T_p}(B_{2R}(0))}=o(R^{\mu_p})$, we have 
$$
\int_{B_{2R}(0)} w^{\theta{'}} \,dx\leq c\big(\int_{B_{2R}(0)} w^{T_p} \,dx\big)^{\frac{\theta{'}}{T_p}}\leq c R^\frac{\theta{'}\mu_p}{T_p}
$$ 
Hence from $(\ref{main1ineq})$, we obtain
$$
\int_{B_{R}(0)} g(x)(\frac{1}{u})^{2\beta+p} \,dx\leq cR^{{\theta'}(\frac{\mu_p}{T_p}-p)}
$$
Since $\mu_p<pT_p$, letting $R\to\infty$, we obtain
$$
\int_{\mathbb{R}^N} g(x)(\frac{1}{u})^{2\beta+p} \,dx=0
$$
which is a contradiction.
\end{proof}
\section*{Acknowledgement:}
The first author and second author are supported by Inspire Faculty Award DST-MATH 2013-029
 and by NBHM Fellowship No: 2-39(2)-2014 (NBHM-RD-II-8020-June 26, 2014) respectively.

Kaushik Bal and Prashanta Garain\\
E-mail: kaushik@iitk.ac.in, pgarain@iitk.ac.in\\
Department of Mathematics and Statistics.\\
Indian Institute of Technology, Kanpur.\\
UP-208016, India\\

\end{document}